\providecommand{\abs}[1]{\left\lvert#1\right\rvert}
\def \R{\mathbb{R}}
\def \ud{\,\text{d}}
\def \N{\mathbb{N}}
\def \E{\mathbb{E}}
\def \Var{\hbox{{\rm Var}}}
\def \Cov{\hbox{{\rm Cov}}}
\newtheorem{theorem}{Theorem}[section]
\title{Multifidelity variance reduction for pick-freeze Sobol index estimation}
\author{Alexandre Janon\thanks{Laboratoire de Sciences Actuarielle et Financi\`ere, ISFA, Universit\'e Lyon 1, 50 avenue Tony Garnier, 69007 Lyon, France. Homepage: \url{http://isfaserveur.univ-lyon1.fr/\~janona/}}}
\begin{document}
\maketitle
\begin{abstract}
Many mathematical models involve input parameters, which are not precisely known. Global sensitivity analysis aims to identify the parameters whose uncertainty has the largest impact on the variability of a quantity of interest (output of the model). One of the statistical tools used to quantify the influence of each input variable on the output is the Sobol sensitivity index, which can be estimated using a large sample of evaluations of the output. We propose a variance reduction technique, based on the availability of a fast approximation of the output, which can enable significant computational savings when the output is costly to evaluate.
\end{abstract}

\tableofcontents
\section*{Introduction}
Many mathematical models encountered in applied sciences involve a large number of poorly-known parameters as inputs. It is important for the practitioner to assess the impact of this uncertainty on the model output. An aspect of this assessment is sensitivity analysis, which aims to identify the most sensitive parameters, that is, parameters having the largest influence of the output. In global stochastic sensitivity analysis (see for example \cite{saltelli-sensitivity} and references therein) the input variables are assumed to be  independent random variables. Their probability distributions  account for the practitioner's belief about the input uncertainty. This turns the model output into a random variable, whose total variance can be split down into different partial variances (this is the so-called Hoeffding decomposition, see \cite{van2000asymptotic}). Each of these partial variances measures the uncertainty on the output induced by each input variable uncertainty. By considering the ratio of each partial variance to the total variance, we obtain a measure of importance for each input variable that is called the \emph{Sobol index} or \emph{sensitivity index} of the variable \cite{sobol1993}; the most sensitive parameters can then be identified and ranked as the parameters with the largest Sobol indices. 

Once the Sobol indices have been defined, the question of their effective computation or estimation remains open. In practice, one has to estimate (in a statistical sense) those indices using a finite sample (of size typically in the order of hundreds of thousands) of evaluations of model outputs \cite{helton2006survey}. Indeed, many Monte Carlo or quasi Monte Carlo approaches have been developed by the experimental sciences and engineering communities. Such an approach is the Sobol pick-freeze (SPF) scheme  (see \cite{sobol1993,sobol2001global}). In SPF a Sobol index is viewed as the regression coefficient between the output of the model and its pick-freezed replication. This replication is obtained by holding the value of the variable of interest (frozen variable) and by sampling the other variables (picked variables). The sampled replications are then combined to produce an estimator of the Sobol index. 

The SPF method requires many (typically, around one thousand times the number of input variables) evaluations of the model output. In many interesting cases, an evaluation of the model output is made by a complex computer code (for instance, a numerical partial differential equation solving algorithm) whose running time is not negligible (typically in the order of a second or a minute) for one single evaluation. When thousands of such evaluations have to be made, one generally replaces the original {\it exact} model by a faster-to-run \emph{metamodel} (also known in the literature as \emph{surrogate model} or \emph{response surface} \cite{box1987empirical}) which is an approximation of the true model. Well-known metamodels include Kriging \cite{sant:will:notz:2003}, polynomial chaos expansion \cite{sudret2008global} and reduced bases \cite{nguyen2005certified,janon2011certified}, to name a few. From a multifidelity point of view, the metamodel can also be viewed as a ``coarse'' (low-fidelity) version of the code; the metamodel, seen as a coarse version, may also be a ``degraded'' version of the code: for instance, it may be a solver for a simplified model (either mathematically simplified, or discretized on a coarser grid), an integrator for a function of lesser precision, or an optimizer stopped before its full convergence. In this paper, we designate by ``coarse approximation'' any of the above approximations (metamodels and degraded versions).   When using a coarse approximation for sensitivity analysis, the original model is generally used only to define the metamodel, and not to perform the Sobol index estimation. This leads to a necessity of measuring the difference between the model and its approximation in order to certify the sensitivity index estimation \cite{janon2011certified,jaal}. To our best knowledge, no approach for using both metamodel and model evaluations to estimate Sobol indices have been proposed yet. 

In this work, we propose an approach, based on the asymptotic properties of the SPF scheme studied in \cite{jaal} to optimally combine evaluations of the original model and evaluations of its approximation, in order to produce an asymptotically-justified confidence interval for the Sobol index of the original model. Our approach is inspired by the quasi-control variate method \cite{emsermann2002improving} which has been developed for Monte-Carlo estimation of means. 

This paper is organized as follows: in the first section, we begin by setting up the notations and the context of the paper. Then we define the Sobol index estimator we wish to study. The main result is Theorem \ref{t:theo}, which provides an asymptotic method to estimate a confidence interval for a Sobol index. The second section is a numerical illustration on a particular (but representative) kind of model output. 

\section{Motivation and definition of the estimator}
\subsection{Notation and context}
We begin by setting up the usual notations in the sensitivity analysis contexts. The output of interest is a random variable $Y$, which is a deterministic function $\eta: \R^p \rightarrow \R$ of the random inputs $X \in \R^{p_1}$ and $Z \in \R^{p_2}$:
\[ Y = \eta(X, Z), \]
where $p_1$ and $p_2$ are integers, and $p=p_1+p_2$.

We assume that $X$ and $Z$ are independent random variables and that $Y$ has a finite and nonzero variance. We are interested in the (closed) Sobol index \cite{saltelli2004practice} with respect to $X$, defined by:
\[ S = \frac{\Var(\E(Y|X))}{\Var Y}. \]

This index, which is between 0 and 1, quantifies the influence of the $X$ input on the output $Y$: a value of $S$ that is close to $1$ indicates that $X$ is highly influential on $Y$. 

The pick-freeze method \cite{jaal} expresses $S$ using a covariance:
\[ S = \frac{\Cov(Y,Y')}{\Var Y} \;\; \textrm{ for } \;\; Y'=f(X,Z'), \]
where $Z'$ is an independent copy of $Z$.

This expression leads to different Monte-Carlo estimators of $S$. For instance, the following estimator is studied in \cite{jaal}:
\[ T_N^\eta = \frac{ \frac1N \sum Y_i Y_i' - \left( \frac1N \sum \frac{Y_i+Y_i'}{2} \right)^2 }{ \frac{1}{N}\sum  \Big[\frac{Y_i^2+(Y_i')^2}{2}\Big] - \left( \frac{1}{N} \sum  \Big[\frac{Y_i+Y_i'}{2}\Big] \right)^2 }, \]
where, $(Y_i)_{i=1,\ldots,N}$ and $(Y_i')_{i=1,\ldots,N}$ are independent samples of $Y$ (resp. $Y'$), and, as in the rest of the paper, all sums are for $i$ from 1 to $N$.

It is shown [op.cit., Proposition 2.2] that $(T_N)_N$ is asymptotically normal, with variance $\sigma_{T,\eta}^2/N$, where:
\begin{equation}
\label{e:defsigmaTeta}
\sigma_{T,\eta}^2 = \frac{\Var\left( (Y-\E(Y))(Y'-\E(Y)) - S/2 \left( (Y-\E(Y))^2+(Y'-\E(Y))^2 \right) \right)}{(\Var(Y))^2}, 
\end{equation}
and [op.cit., Proposition 2.5] that this asymptotic variance is minimal among regular estimators that are functions of realizations of exchangeable $(Y,Y')$ pairs.

Note that a realization of the $T_N^\eta$ estimator, for a finite sample size $N$, can be computed by making $2N$ evaluations of the $\eta$ function. 

In this paper, we suppose that we can evaluate, in addition to the $\eta$ function, an \emph{approximation} $\eta_c: \R^p \rightarrow \R$ of the $\eta$ function (the $c$ index is for \emph{coarse}). 

The usage of such an approximation has been motivated in the Introduction. A concrete and ubiquitous example of $\eta$ and $\eta_c$ will be presented in the next section. In the following section, we motivate and study our variance-reduced estimator of $S$.  

\subsection{Variance-reduced estimator}
Let:
\[ Y_c = \eta_c(X,Z), \;\; Y_c' = \eta_c(X,Z'), \]
and $(Y_{ci})_{i=1,\ldots,N}$, $(Y_{ci}')_{i=1,\ldots,N}$ be $N$-samples of $Y_c$ (resp. $Y_c'$).

The estimator:
\[ T_N = \frac{ \frac1N \sum Y_{ci} Y_{ci}' - \left( \frac1N \sum \frac{Y_{ci}+Y_{ci}'}{2} \right)^2 }{ \frac{1}{N}\sum  \Big[\frac{Y_{ci}^2+(Y_{ci}')^2}{2}\Big] - \left( \frac{1}{N} \sum  \Big[\frac{Y_{ci}+Y_{ci}'}{2}\Big] \right)^2 } \]
consistently estimates the Sobol index of the coarse model:
\[ S_c = \frac{\Var(\E(Y_c|X))}{\Var Y_c}. \]
by using $2N$ evaluations of $\eta_c$.

As mentioned in the introduction, our objective is to combine evaluations of $\eta$ and $\eta_c$ to estimate $S$ at a smaller cost than an estimation that would be performed from evaluations of $\eta$ only.

We take a function $\psi: \N \rightarrow \N$. 

It is clear that the estimator $E_N$ defined by:
\[ E_N = T_{\psi(N)}^\eta - T_{\psi(N)}  \]
consistently estimates $E = S - S_c$, and that a realization of $E_N$ can be obtained using $2 \psi(N)$ evaluations of $\eta_c$ and $2 \psi(N)$ evaluations of $\eta$.

We propose a natural estimator of $S$ based on $T_N$ and $E_N$, inspired by the quasi-control variate method \cite{emsermann2002improving}, is thus:
\[ V_N = T_N + E_N. \]
This estimator can be computed by making $2(N+\psi(N))$ evaluations of $\eta_c$ and $2\psi(N)$ evaluations of $\eta$. As an evaluation of $\eta$ is more costly than one of $\eta_c$, one can expect a computational gain if $\psi(N)\leq N$, and if the asymptotic variance of $(V_N)$ 
is less than the asymptotic variance of $(T_{\psi(N)}^\eta)$, so that asymptotic confidence intervals built upon $V_N$ are more precise than those built on $T_{\psi(N)}^\eta$ alone.

The following theorem gives a method for estimating (conservative) asymptotic confidence intervals using $V_N$. We denote by $\Phi$ the cumulative distribution function of the Gaussian with zero mean and unit variance, and by $\Phi^{-1}$ its inverse.

\begin{theorem}
\label{t:theo}
Suppose that $\lim_{N\rightarrow +\infty} \psi(N)=+\infty$. 

Then, for any $\alpha_e$ and $\alpha_c$ in $]0,1[$:
\[ \lim_{N\rightarrow +\infty} P\left(  \abs{ S - V_N } \leq q(\alpha_e) \frac{\sigma_{e}}{\sqrt{\psi(N)}} + q(\alpha_c) \frac{\sigma_c}{\sqrt N} \right) \geq 1-(\alpha_e+\alpha_c), \]
for:
\[ q(a) =  \Phi^{-1}(1-a/2),  \;\; 
\sigma_c^2 =  \frac{ \Var\left( A_c - B_c/2 \right) }{(\Var Y_c)^2 },
\]
\begin{align*}
\sigma_e^2 &= \sigma_c^2 + \frac{ \Var\left( A - B/2 \right) }{(\Var Y)^2 } 
- \frac{ 2 \Cov(A, A_c) - \left( \Cov(A, B_c) + \Cov(B, A_c) \right) + \Cov(B, B_c)/2  }{\Var Y \, \Var Y_c}, 
\end{align*}
where $A$, $B$, $A_c$, $B_c$ are the following random variables:
\[ A=(Y-\E(Y))(Y'-\E(Y)),\;\; B=S \left[ (Y-\E(Y))^2 + (Y'-\E(Y))^2 \right], \]
\[ A_c=(Y_c-\E(Y_c))(Y_c'-\E(Y_c)),\;\; B_c=S_c \left[ (Y_c-\E(Y_c))^2 + (Y_c'-\E(Y_c))^2 \right]. \]

The same holds when $\sigma_c$ and $\sigma_e$ are replaced by any consistent estimators.
\end{theorem}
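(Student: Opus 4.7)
The plan is to decompose $V_N - S$ into two pieces that are asymptotically independent, apply a central limit theorem to each separately, and combine the resulting confidence bounds by a Bonferroni-type union bound.

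Since $E = S - S_c$, a direct rearrangement gives $V_N - S = (T_N - S_c) + (E_N - E)$. The first summand is built only from the $N$ replicates of $(X,Z,Z')$ used to form $T_N$, whereas the second depends only on the $\psi(N)$ further replicates used jointly to compute $T^\eta_{\psi(N)}$ and $T_{\psi(N)}$; these two batches of samples are independent by construction, hence so are the two summands.

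For the first piece, Proposition~2.2 of \cite{jaal} applied to $\eta_c$ yields $\sqrt{N}(T_N - S_c) \to \mathcal{N}(0,\sigma_c^2)$ with $\sigma_c^2$ exactly the expression stated. For the second piece, I would view $T^\eta_{\psi(N)}$ and $T_{\psi(N)}$ as a common smooth functional applied to the empirical means of a vector of functions of the quadruple $(Y_i, Y_i', Y_{ci}, Y_{ci}')$, and then invoke the multivariate CLT and the delta method. The first-order linearizations established in \cite{jaal} read
\[ T^\eta_{\psi(N)} - S \approx \frac{1}{\psi(N)} \sum_{i=1}^{\psi(N)} \frac{A_i - B_i/2}{\Var Y}, \qquad T_{\psi(N)} - S_c \approx \frac{1}{\psi(N)} \sum_{i=1}^{\psi(N)} \frac{A_{ci} - B_{ci}/2}{\Var Y_c}, \]
so by subtraction $\sqrt{\psi(N)}(E_N - E) \to \mathcal{N}(0,\sigma_e^2)$ with $\sigma_e^2 = \Var\!\left( (A-B/2)/\Var Y - (A_c-B_c/2)/\Var Y_c \right)$. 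Expanding this variance and regrouping covariance terms produces the formula for $\sigma_e^2$ displayed in the theorem.

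To conclude, the triangle inequality and a Bonferroni union bound applied to the two independent pieces give
\begin{align*}
\P\!\left( \abs{V_N - S} > q(\alpha_e)\frac{\sigma_e}{\sqrt{\psi(N)}} + q(\alpha_c)\frac{\sigma_c}{\sqrt{N}} \right)
&\leq \P\!\left( \abs{E_N - E} > q(\alpha_e)\frac{\sigma_e}{\sqrt{\psi(N)}} \right) \\
&\quad + \P\!\left( \abs{T_N - S_c} > q(\alpha_c)\frac{\sigma_c}{\sqrt{N}} \right),
\end{align*}
and each probability on the right converges to its nominal level by the two preceding CLTs (recall $\psi(N) \to \infty$); the version with consistent estimators then follows from Slutsky's lemma. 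The main technical obstacle is the joint CLT/delta method step: one must check that the smooth map underlying $T^\eta$ and $T$ is $C^1$ at the relevant mean vector with non-vanishing gradient (ensured by $\Var Y, \Var Y_c > 0$) and that the required second moments are finite, both of which are inherited from the hypotheses already used in \cite{jaal}.
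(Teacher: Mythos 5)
Your proposal is correct and follows essentially the same route as the paper's own (sketched) proof: a joint CLT/delta-method argument for $(T^\eta_{\psi(N)}, T_{\psi(N)})$ built on common input replicates to obtain $\sqrt{\psi(N)}(E_N-E)\to\mathcal{N}(0,\sigma_e^2)$, combined with the inclusion $\{\abs{V_N-S}\geq\epsilon_1+\epsilon_2\}\subseteq\{\abs{T_N-S_c}\geq\epsilon_1\}\cup\{\abs{E_N-E}\geq\epsilon_2\}$ and a union bound. Your expansion of $\Var\bigl((A-B/2)/\Var Y-(A_c-B_c/2)/\Var Y_c\bigr)$ reproduces the stated $\sigma_e^2$ exactly, and the Slutsky step for consistent plug-in estimators matches the paper's final claim.
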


\begin{proof}[Sketch of proof]
Follow the proof of \cite{jaal}, Proposition 2.2, and apply the $\delta$-method to $(T_N, T_{\psi(N)}^\eta)$ to get the asymptotic variance of $(E_N)$.

Then use that for any $\epsilon_1, \epsilon_2$,
\[ \left\{ \abs{V_N-S} \geq \epsilon_1+\epsilon_2 \right\} \subseteq \left\{ \abs{T_N-S_c}\geq \epsilon_1 \right\} \cup
\left\{ \abs{E_N-E}\geq \epsilon_2 \right\}. \qedhere \]
\end{proof}

\subsection{Choice of $\psi$, $\alpha_e$ and $\alpha_c$}
\label{ss:choice}
To convert the theorem above into a practical procedure, it remains to choose the parameters $\psi$, $\alpha_e$ and $\alpha_c$, so as to minimize the overall computational time.

We will assume that one evaluation of $\eta$ as a unit cost, and that an evaluation of $\eta_c$ has cost $0<\rho<1$. We also set $\psi(N)=\lceil \mu N \rceil$, where  $\mu\in]0,1[$ is to be found, and $\lceil \cdot \rceil$ is the ``ceiling'' function.

We choose a target risk level $\alpha \in ]0,1[$ and a target length $L$ for the confidence interval of Theorem \ref{t:theo}.

It is clear these constraints force $\alpha_c$ in function of $\alpha_e$ and $\alpha$:
\[ \alpha_c = \alpha^*(\alpha_e) = 1 - (\alpha+\alpha_e), \]
and that $N$  has to satisfy:
\[ N \geq N^*(\alpha_e, \mu) := \frac{4}{L^2} \left( \frac{q(\alpha_e) \sigma_e}{\sqrt \mu} + q(\alpha^*(\alpha_e)) \sigma_c \right)^2 \]
We approximate $\psi(N^*)$ by $\mu N^*$. The cost of the required evaluations of $\eta$ and $\eta_c$ is thus, in the general case:
\[ \text{Cost}(\alpha_e,\mu) = 2 N^*(\alpha_e, \mu) \left( 2 \mu + \rho \right), \]
corresponding to the $\psi(N)$ evaluations of $\eta$ and the $\psi(N)+N$ evaluations of $\eta_c$.

However, in some settings, the computations made to compute $\eta_c$ can be reused to compute $\eta$, allowing to evaluate $\eta_c$ \underline{and} $\eta$ for a unit cost, leading to:
\[ \textrm{Cost}_{\textrm{Hier}}(\alpha_e,\mu) = 2 N^*(\alpha_e,\mu) \left( \mu + \rho \right). \]
Such a ``hierarchical'' property is beneficial to our estimation scheme and occurs naturally for some $\eta$, as we will see in the numerical illustration section.

Now, one would obviously choose $\alpha_e$ and $\mu$ so as to minimize the cost $\text{Cost}(\alpha_e,\mu)$ (or, depending on the case at hand, $\textrm{Cost}_{\text{Hier}}(\alpha_e,\mu)$). In practice, this is not possible, as $\sigma_e$ and $\sigma_c$ are unknown. Hence, approximately optimal parameters are found by empirically estimating these quantities, based on a small sample of realizations of $Y$, $Y'$, $Y_c$ and $Y_c'$. This gives rise to $\widehat\alpha_e^*$ and $\widehat\mu^*$, and an estimated optimal costs: 
\[ \widehat{\textrm{Cost}}(\widehat \alpha_e^*, \widehat\mu^*) \text{ and } \widehat{\textrm{Cost}_{\textrm{Hier}}}(\widehat \alpha_e^*, \widehat\mu^*) . \]

\section{Numerical illustration}
\subsection{Model set-up}
In financial mathematics, the Heston model \cite{heston1993closed} is the following stochastic differential model for the price of a risky asset $(S_t)_{t\ge 0}$ as function of the time $t>0$:
\[
\left\{
	\begin{array}{l}
	\ud S_t = \mu S_t \ud t + \sqrt{\nu_t} S_t \ud W_t^1 \\
	\ud \nu_t = \kappa (\theta - \nu_t) \ud t + \xi \sqrt{\nu_t} \ud W_t^2
	\end{array}
\right.,
\]
where $(W_t^1)_{t\ge 0}$ and $(W_t^2)_{t\ge 0}$ are standard Brownian motions (under the risk-neutral probability measure $Q$) whose correlation is $r \in [0,1]$.

We are interested in the price of an European call option of maturity $T$ and strike $K$, which is given by $e^{-R T}\E_Q\left( \left( S_T-K \right)_+ \right)$.

Although a semi-analytical formula is available for the fast computation of this expectation (such a formula may not exist for more complex dynamics of the underlying asset, or for exotic options), we will use a numerical approximation so as to illustrate our methodology on a realistic model example. The expectation is approached by the following Monte-Carlo procedure:
\[ \eta(\nu_0, \kappa, \theta, r, \xi, R, S_0, T, K) = \frac{ e^{-R T} }{ M } \sum_{j=1}^M ( S_{T,j} - K )_+ \]
with an Euler-Maruyama approximation of $(S_t, \nu_t)_{t\in[0,T]}$ with timestep $h>0$: for $j=1,\ldots,M$ and $t=1, ..., T/h$:
\[
\left\{
	\begin{array}{l}
	S_{0, j}=S_0\\
	\nu_{0, j}=\nu_0 \\
	S_{t h, j} = S_{(t-1)h, j} \left( 1 + R h + \sqrt{\nu_{(t-1)h, j}} \sqrt{h} \Delta W_{t, j}^1 \right) \\
	\nu_{t h, j} = \nu_{(t-1)h, j} \left( 1+\kappa(\theta-\nu_{(t-1)h, j}) h + \xi \sqrt{\nu_{(t-1)h, j}} \sqrt{h} (r \Delta W_{t,j}^1 + \sqrt{1-r^2} \Delta W_{t,j}^2) \right)
	\end{array}
	\right.
\]
where $h>0$ is the time discretization parameter, and $(\Delta W_{t,j}^1, \Delta W_{t,j}^2)$ are indepedent realizations of a standard Gaussian random variable.

We fix $S_0=60$ (the initial price of the asset), $T=0.25$, $K=30$, as well as the discretization parameters $h=.001$ and $M=10000$. The uncertain parameters are $X=(\nu_0)$ and $Z=(\kappa, \theta, r, \xi, R)$, which are given the uniform distribution probabilities summarized in Table \ref{t:1}.

\begin{table}
\centering
\begin{tabular}{|l|l|l|l|}
\hline Name &    Interpretation           & Min. & Max. \\
\hline $\nu_0$ & Initial volatility       & .2     & .25     \\
\hline $\kappa$& Volatility convergence rate& 0   &  3    \\
\hline $\theta$& Volatility limit         &  .2    &  .22    \\
\hline $r$     & Correlation between Brownians &  -1  &  1   \\
\hline $\xi$   & Volatility of the volatility &  0   &  .4    \\
\hline $R$       & Risk-free rate           &  .08    &  1.1    \\
\hline
\end{tabular}
\caption{Distributions and interpretations of the input parameters. }
\label{t:1}
\end{table}

The coarse approximation uses a reduced number $m$ of simulated trajectories to compute the empirical mean:
\[ \eta_c(\nu_0, \kappa, \theta, R, \xi, R, S_0, T, K) = \frac{ e^{-R T} }{ m } \sum_{j=1}^m ( S_{T,j} - K )_+ \]
Note that for computing $\eta_c$, the same time discretization parameter $h$, as well as the same simulated Brownian increments $\Delta W^{1,2}$ are kept, hence our approximation is ``hierarchical'' in the sense of Subsection \ref{ss:choice}.

We chose $m=5000$, so that $\rho=m/M=1/2$.

\subsection{Results and discussion}
We estimated $\sigma_c$ and $\sigma_e$ based on a sample of $n=100$ realizations of each variable $Y, Y', Y_c$ and $Y_c'$. The estimates are:
\[ \widehat{\sigma_c}=.9017 \;\; \widehat{\sigma_e}=.4909.  \]
For comparison purposes, we also estimated $\sigma_{T,\eta}$:
\[ \widehat{\sigma_{T,\eta}}=.8491. \]

We are interested in the (estimated) \emph{relative efficiency} of the confidence intervals based on our variance-reduced estimator, as compared with those based on $T^\eta$, that is:
\[ \widehat{\text{Eff}} = 1 - \frac{ \widehat {\textrm{Cost}_{\textrm{Hier}}}(\widehat \alpha_e^*, \widehat\mu^*) }{ \widehat{\textrm{ClassicalCost} }},  \]
where $\widehat {\textrm{ClassicalCost}}$ is the cost of the $\eta$ evaluations necessary to produce an asymptotic confidence interval of fixed length $L$ using only the $T^\eta$ estimator:
\[ \widehat {\textrm{ClassicalCost}} = 2 \frac{4}{L^2} (q(\alpha)\widehat\sigma_{T,\eta})^2. \]
As the denominator and the numerator of $\text{Eff}$ are proportional to $L^2$, the relative efficiency is independent of the target length of the confidence interval $L$. 

In Figure \ref{f:1}, we plot the estimated relative efficiency of our variance-reduced estimator, as function of the target risk level $\alpha$. 

\begin{figure}
\centering
\includegraphics[width=10cm]{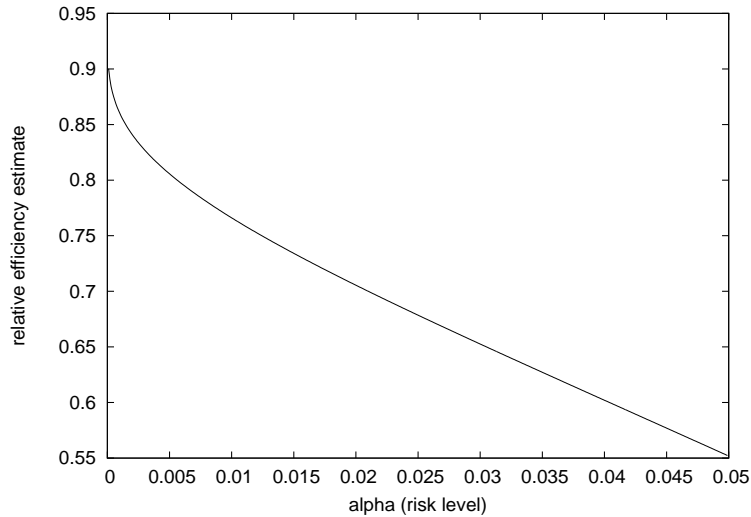}
\caption{Estimates of relative efficiencies, for various values of $\alpha \in [0.0001, 0.05] $. }
\label{f:1}
\end{figure}

We see that, based on empirical estimations, our variance reduction enables an interesting reduction of the computational cost by more than $50\%$ for $\alpha=0.05$, and this reduction is even more significative for small risk levels (up to $90\%$ for $\alpha=0.0001$). 

\bibliographystyle{plain}
\bibliography{biblio,bibi}

\bigskip

\textbf{Acknowledgements. } This work has been partially supported by the French National Research Agency (ANR) through COSINUS program (project COSTA-BRAVA nr. ANR-09-COSI-015).

\end{document}